\renewcommand{\u}{\boldsymbol{u}}
\newcommand{\n}{\boldsymbol{n}}
\newcommand{\f}{\boldsymbol{f}}
\newcommand{\g}{\boldsymbol{g}}
\newcommand{\h}{\boldsymbol{h}}
\newcommand{\w}{\boldsymbol{w}}
\renewcommand{\v}{\boldsymbol{v}}
\newcommand{\T}{\mathcal{T}}
\newcommand{\C}{\mathcal{C}}
\newcommand{\B}{\mathcal{B}}
\newcommand{\N}{\mathcal{N}}
\newcommand{\R}{\mathcal{R}}
\newcommand{\bsff}{\boldsymbol{\mathsf{f}}}
\newcommand{\bsfg}{\boldsymbol{\mathsf{g}}}
\newcommand{\bw}{\boldsymbol{w}}
\newcommand{\btau}{\boldsymbol{\tau}}
\newcommand{\bsigma}{\boldsymbol{\sigma}}
\newcommand{\pdiff}[2]{\frac{\partial #1}{\partial #2}}
\newtheorem{theorem}{Theorem}[section]
\theoremstyle{definition}
\newtheorem{remark}[theorem]{Remark}
\title{Entropy-stable in- and outflow boundary conditions for the compressible Navier-Stokes equations}
\author{Magnus Sv{\"a}rd$^1$ and Anita Gjesteland$^2$\\ \footnotesize    $^1$ Department of Mathematics, University of Bergen, P.O Box 7803, N-5020 Bergen, Norway \\ \footnotesize $^2$ Department of Applied Mathematics, University of Waterloo, Waterloo, ON, Canada, N2L 3G1 \\
\small $^1$ magnus.svard@uib.no, $^2$ agjesteland@uwaterloo.ca
}
\begin{document}

\maketitle

\begin{abstract}
We propose inflow and outflow boundary conditions for the compressible Navier-Stokes equations and prove that they allow a priori estimates of the entropy, mass and total energy. Furthermore, we demonstrate how to approximate these boundary conditions in conjunction with an entropy-stable finite-volume scheme. The method is also applicable to other types of entropy-stable schemes. Finally, we carry out some numerical computations with the finite-volume scheme and demonstrate their robustness.
\end{abstract}

\section{Introduction}

When computing fluid flows with the compressible Euler or Navier-Stokes equations, the boundary conditions are quintessential in defining the model. They model the effects that objects submerged in the fluid have on the flow; they define what flows in and out of the domain, or they mimic an open boundary infinitely far away from the domain of interest.

Mathematically, the PDE (partial differential equation) and its boundary conditions must form a well-posed problem, i.e., a unique and stable solution to the problem must exist in a well-defined function space. Knowing in what function space the solution resides, is necessary when designing convergent numerical schemes.

Here, we are focusing on the compressible Navier-Stokes equations. Although its well-posedness remains elusive, the linearised well-posedness and associated numerical stability with various boundary conditions have been extensively studied. See e.g. \cite{Nordstrom95(1),HesthavenGottlieb96,NordstromSvard05,SvardCarpenter07}. By using summation-by-parts schemes (\cite{SvardNordstrom14,FernandezHicken14}), it is now well-known how to design linearly stable and possibly high-order accurate numerical approximation schemes. 

The importance of linear stability when solving the non-linear Navier-Stokes equations was established in \cite{Strang64}, where it was shown that linearly stable schemes converge as long as the solution to the non-linear PDE is smooth. However, solutions are not always smooth and there is a need to extend the theory to allow for possibly non-smooth solutions. Although, we do not currently have a well-posedness theory, it will undoubtedly require the initial-boundary value problem and its numerical discretisation scheme to satisfy a number of non-linear a priori estimates. 

Here, we consider three fundamental estimates that are available for the compressible Navier-Stokes equations. Namely, conservation of mass, conservation of total energy and boundedness of the entropy (the second law of thermodynamics). The first two are readily obtained for conservative discretisation schemes, and entropy bounds can be derived for entropy-stable schemes (\cite{Tadmor03,Carpenter_etal16}). 

To obtain entropy bounds for initial-boundary value problems, entropy-stable
numerical approximation schemes need to be closed with appropriate boundary approximations that enforce the boundary conditions.  For  the Euler equations subject to wall boundaries, entropy stable numerical fluxes were proposed in \cite{SvardOzcan14}. Generalisations of these boundary conditions, as well as strong impositions of the no-slip boundary condition for the Navier-Stokes equations were later derived in e.g. \cite{SvardCarpenter18,ParsaniCarpenter15,ChanLin22,GjestelandSvard22}.

Other types of boundary conditions have proven much harder to derive entropy bounds for. For the Euler equations, attempts to design entropy-stable boundary conditions (other than walls) are found in \cite{OlssonOliger94,SvardOzcan14,SvardMishra12,Svard21}. For open problems governed by the compressible Navier-Stokes equations, i.e., for objects submerged in a free stream that extends to infinity, entropy-stable boundary conditions, and associated entropy-stable approximation schemes, were derived in  \cite{SvardGjesteland24}. However, these boundary conditions cannot be used on a finite domain, which precludes the modelling of inlets and outlets of a channel.

In \cite{Svard25}, the problem was finally solved for the Euler equations. Boundary conditions, and numerical boundary fluxes, were derived that can be used at any type of inlets and outlets, including channels and open boundaries. They were proven to be linearly stable and non-linear bounds for the entropy, mass and total energy were derived. Moreover, they are fully compatible with previously derived entropy-stable wall boundary conditions.

In this paper, we generalise the boundary conditions derived in \cite{Svard25} for the Euler equations to the compressible Navier-Stokes equations.
\begin{itemize}
\item For the continuous equation, we derive non-linear a priori estimates for mass, total energy and entropy.
\item We propose numerical boundary fluxes that enforce these conditions and prove the corresponding semi-discrete a priori bounds.
  \item The current procedure is compatible with previously derived entropy-stable wall boundary conditions. 
\end{itemize}
We emphasise that the only a priori assumptions we need on the (numerical) solution is that the thermodynamic variables remain positive. We do not need to a priori assume that density, velocity or temperature remain bounded, in order to derive the non-linear bounds.


\section{The compressible Navier-Stokes equations}

\subsection{The initial-boundary value problem}

To reduce notation, and without restriction, we consider the two-dimensional case (2-D) with only inflow or outflow boundaries. We will briefly highlight the necessary modifications to handle the one-dimensional (1-D) and three-dimensional (3-D) cases. 

Thus, in 2-D, let $\Omega$ be a domain with boundary $\partial \Omega$. We define $\n=(n^1,n^2)^\top$ as the outward pointing unit normal and  $\btau$ the (counter clock-wise) unit tangent vector to $\partial \Omega$. 

The 2-D compressible Navier-Stokes equations govern the evolution of the conserved variables,   
$$\u = \begin{bmatrix}\rho, & m, & n, & E\end{bmatrix}^\top,$$
denoting density, momentum (in the $x$- and $y$-directions), and total energy, respectively. On $\Omega\times (0,\T]$, the Navier-Stokes equations take the form,
\begin{align}\label{eq:NS}
	\pdiff{\u}{t} + \pdiff{\f^I}{x} + \pdiff{\g^I}{y} & = \pdiff{\f^V}{x} + \pdiff{\g^V}{y}.
\end{align}
The inviscid fluxes are given by
\begin{equation}\label{eq:inviscid fluxes}
	\begin{aligned}
		\f^I  = \left(\begin{matrix} \rho u \\ \rho u^2 + p \\  \rho uv \\ u(E+p)\end{matrix}\right), \hspace{2em}
		\g^I  = \left(\begin{matrix} \rho v \\ \rho uv \\ \rho v^2 + p \\ v(E+p) \end{matrix}\right),
	\end{aligned}
\end{equation}
where $u = \frac{m}{\rho}$ and $v = \frac{n}{\rho}$ are the velocity components in the $x$- and $y$-directions, respectively; they form the velocity vector $\v^\top=(u,v)$. The pressure is given by $p = (\gamma-1) (E- \frac{1}{2} \rho(u^2 + v^2))$ where the constant $\gamma= \frac{c_p}{c_v}$ is the ratio between the specific heat capacities at constant pressure and volume. Moreover, the temperature is given by the ideal gas law, $T = \frac{p}{\rho R}$, where $R = c_p - c_v$ is the gas constant. The sound speed is $c=\sqrt{\gamma RT}$ and the specific (physical) entropy is $S =c_v \ln(p/\rho^\gamma)$. Finally, we define the local Mach number as $Ma=\frac{|\v|}{c}$. 

The viscous fluxes are
\begin{equation}\label{eq:viscous_fluxes}
	\begin{aligned}
		\f^V  = \left(\begin{matrix}  0\\ \tau_{xx}\\  \tau_{xy} \\ u\tau_{xx}+v\tau_{xy}+\kappa T_x\end{matrix}\right), \hspace{2em}
		\g^V  = \left(\begin{matrix}  0 \\ \tau_{xy} \\ \tau_{yy}\\ u\tau_{xy}+v\tau_{yy}+\kappa T_y \end{matrix}\right),
	\end{aligned}
\end{equation}
where $\kappa$ is the heat conductivity. Here, we use Stokes' hypothesis, which leads to
\begin{align*}
  \tau_{xx} &= \frac{4}{3} \mu u_x -\frac{2}{3}\mu v_y,\\
  \tau_{xy} &= \mu (u_y+v_x),\\
  \tau_{yy} &= \frac{4}{3}\mu v_y -\frac{2}{3}\mu u_x,
\end{align*}
where $\mu$ is the dynamic viscosity.
Solutions to (\ref{eq:NS}) are required to be \emph{admissible}.  i.e., only solutions with $p,\rho,T>0$ are allowed. Therefore, we take initial data to be uniformly bounded in $H^1(\Omega)$ and with $p,\rho,T$ bounded away from zero.

To define the boundary conditions, we need boundary data (denoted with a subscript $b$) at $(x,y,t)\in \partial\Omega\times [0,\T]$. In particular,   $v_{b\tau}(x,y,t)$ denotes boundary data for the tangential component of the velocity at the boundary and $\v_b(x,y,t)=(u_b,v_b)$ is boundary data for the full velocity vector. The boundary data are subject to the same restrictions as the initial data. That is, $\rho_b(x,y,t)$, $p_b(x,y,t)$, $v_{b\tau}(x,y,t)$, $\v_b(x,y,t)$ are  bounded in $H^1(\partial\Omega\times(0,\T])$ and $\rho_b,p_b, T_b\geq \epsilon>0$. For consistency, if both $\rho_b$ and $p_b$ are used at the boundary they define, through the gas law, the boundary temperature $T_b$ and the speed of sound $c_b$.
\begin{remark}
Although we provide data for the solution variables, they are not boundary conditions. E.g., we do not enforce $p=p_b$ on any boundary. The boundary conditions will be given below and they use the available boundary data. 
\end{remark}

Next, we split the boundary $\partial \Omega$ into four subsets:
\begin{align*}
  \partial\Omega^{sup.in}&=\{(x,y)\in\partial\Omega :\v\cdot\n<0,\,\,|\v\cdot\n|>c_b\},\\
  \partial\Omega^{sub.in}&=\{(x,y)\in\partial\Omega:\v\cdot\n<0,\,\,|\v\cdot\n|\leq c_b\},\\
  \partial\Omega^{sub.out}&=\{(x,y)\in\partial\Omega:\v\cdot\n>0,\,\,|\v\cdot\n|\leq c\},\\
  \partial\Omega^{sup.out}&=\{(x,y)\in\partial\Omega:\v\cdot\n>0,\,\,|\v\cdot\n|>c\}.
\end{align*}

Furthermore, we define
\begin{align*}
\partial \Omega^{in}&=\partial\Omega^{sup.in}\cup \partial\Omega^{sub.in},\\
\partial \Omega^{out}&=\partial\Omega^{sup.out}\cup \partial\Omega^{sub.out}.
\end{align*}
 For physical consistency at supersonic-inflow points ($\partial \Omega^{sup.in}$), we also require that $\v_b\cdot \n<0$ and $|\v_b\cdot \n|>c_b=\sqrt{\gamma p_b/\rho_b}$.


We propose the following boundary conditions at inflows and outflows:
\begin{align}
 n^1(\f^I-\f^V)+n^2(\g^I -\g^V) = [n^1\f_b+n^2 \g_b](x,y,t)\label{bcs}
\end{align}
where $[n^1\f_b+n^2 \g_b]$ is a \emph{boundary data flux vector}. To define the boundary data flux, we need a velocity vector $(u^*,v^*)$ that coincides with $v_n=\v\cdot\n$ and whose tangential component is given by the boundary data $v_{b\tau}$. 
These requirements lead to
\begin{align*}
  u^*=v_nn^1-v_{b\tau}n^2,\\
  v^*=v_nn^2+v_{b\tau}n^1.
\end{align*}
(See \cite{Svard25} for a detailed explanation.)

The boundary data flux vectors  for the different boundary types are the same as in the Euler case  (see \cite{Svard25} Eqn. (20)-(23)), which ensures that \eqref{bcs} reduces to the Euler boundary conditions in the inviscid limit.

$\textbf{Supersonic inflow}\quad (\partial \Omega^{sup.in}):$
\begin{equation}\label{b_flux_supin}
  \begin{aligned}
    [n^1\f_b+n^2 \g_b]&=
    \left(\rho_b \v_b\cdot \n,\rho_bu_b \v_b\cdot \n +n^1p_b,\rho_b v_b \v_b\cdot \n + n^2p_b, \v_b\cdot \n(E_b+p_b) \right)^\top,\\
E_b&=\frac{1}{2}\rho_b(u_b^2+v_b^2)+\frac{p_b}{\gamma-1}.
\end{aligned}
\end{equation}

$   \textbf{Subsonic inflow}\quad ( \partial \Omega^{sub.in}):$
\begin{equation}\label{b_flux_subin}
  \begin{aligned}
   [n^1\f_b+n^2 \g_b]&=
    \left(\rho_bv_n,\rho_b v_n u^*+n^1p_b,\rho_b v_n v^*+n^2p_b,v_n(E^*+p_b) \right)^\top \\
    E^*&=\frac{1}{2}\rho_b(v_n^2+v_{b\tau}^2)+\frac{p_b}{\gamma-1}
\end{aligned}
\end{equation}

$    \textbf{Subsonic outflow}\quad (\partial \Omega^{sub.out}):$
\begin{equation}\label{b_flux_subout}
  \begin{aligned}
    [n^1\f_b+n^2 \g_b]&=
    \left(\rho v_n,\rho u v_n +n^1p_b,\rho v v_n + n^2p_b, v_n(E+p) \right)^\top
\end{aligned}
\end{equation}
Note that pressure data is only used in the momentum flux components.

$    \textbf{Supersonic outflow}\quad (\partial \Omega^{sup.out}):$
\begin{equation}\label{b_flux_supout}
  \begin{aligned}
         [n^1\f_b+n^2 \g_b]&= n^1\f^I+n^2 \g^I,
\end{aligned}
\end{equation}
which is equivalent to setting the normal viscous flux to zero.
\begin{equation}\label{b_flux_supout_2}
  \begin{aligned}
    - n^1\f^V-n^2\g^V&=0 \quad \textrm{on}\quad \partial \Omega^{sup.out}.
\end{aligned}
\end{equation}
Note that in all cases but the last, it is the total flux (not the inviscid flux) that is (partially) given by data. 


\subsection{Compatibility of data}\label{sec:comp}

We have already mentioned that the data for velocity and speed of sound at supersonic inflows, must be compatible with a supersonic inflow. 

Furthermore, in order to divide the boundary into subdomains with sub- or supersonic flows, the reference speed of sound must be unambiguous. That requires that $p_b(x,y,t)$ and $\rho_b(x,y,t)$ at a point $(x,y)\in \partial\Omega$ are independent of the current local flow conditions. That is, we \emph{can not} set
\begin{align*}
\rho_b(x,y,t)&=f_1(x,y,t),\quad p_b(x,y,t)=g_1(x,y,t),\quad x\in{ \partial} \Omega^{sub.in},\\
\rho_b(x,y,t)&=f_2(x,y,t),\quad p_b(x,y,t)=g_2(x,y,t),\quad x\in{ \partial} \Omega^{sup.in},
\end{align*}
 when the flow shifts between sub- or supersonic at a boundary point. We have to use $f_1=f_2$ and $g_1=g_2$ such that $\rho_b,p_b$ are both time and space continuous. 

For similar reasons, $p_b(x,y,t)$ on the boundary must not shift its value discontinuously when the flow shifts between subsonic inflow and subsonic outflow. 


Hence, to allow for continuous solutions, we must require that the data functions are continuous in time and space (along the boundary). Moreover, the initial data must be compatible with the boundary data (see \cite{GKO}).

\subsection{1-D and 3-D modifications}

In 1-D, the tangential velocity component is zero and the  velocity $u$ is also the normal component. Hence, the subsonic inflow data flux is
\begin{equation}\label{b_flux_subin_1-D}
  \begin{aligned}
    \f_{b}&=
    \left(\rho_bu,\rho_bu^2+p_b,u(E^*+p_b) \right)^\top\\ 
    E^*&=\left(\frac{1}{2}\rho_bu^2+\frac{p_b}{\gamma-1}\right)
\end{aligned}
\end{equation}
The modifications to obtain the other three flux data vectors are analogous. 

In 3-D, there are two fluxes $\h^I$ and $\h^V$ in the $z$-direction, and the fluxes and solution are extended to five components. The velocity and normal vectors also have a $z$-component, $w$ and $n^3$. At the boundary there is also a second tangential component $\bsigma$, orthogonal to both $\btau$ and $\n$ (forming a right-handed coordinate system). The equations take the form
\begin{align*}
	\pdiff{\u}{t} + \pdiff{\f^I}{x} + \pdiff{\g^I}{y}+\pdiff{\h^I}{z} & = \pdiff{\f^V}{x} + \pdiff{\g^V}{y}+\pdiff{\h^V}{z},
\end{align*}
where 
\begin{align}
 n^1(\f^I-\f^V)+n^2(\g^I -\g^V)+n^3(\h^I-\h^V) &=\nonumber\\
 [n^1\f_b+n^2 \g_b+n^3\h_b](x,y,t)\nonumber
\end{align}
In particular, the subsonic inflow conditions become,
\begin{equation*}
\begin{aligned}
     n^1\f_b+n^2 \g_b +n^3\h_b=\\ 
 \left(\rho_bv_n,v_n\rho_bu^*+n^1p_b,v_n\rho_b v^*+n^2p_b,v_n\rho_b w^*+n^3p_b,v_n(E^*+p_b) \right)^\top,
\end{aligned}
\end{equation*}
where
\begin{equation*}
    E^*=\left(\frac{1}{2}\rho_b(v_n^2+v_{b\tau}^2+v_{b\sigma}^2)+\frac{p_b}{\gamma-1}\right),
\end{equation*}
 and $v_{b\sigma}$ is the given bounded boundary-data function for the second tangential velocity component. The other boundary conditions are generalised to 3-D following the same pattern.


\subsection{The number of boundary conditions}

The number of boundary conditions required for well-posedness of the linearised Navier-Stokes equations was determined in \cite{GustafssonSundstrom78} and is given in Table \ref{tab1}.
\begin{table}[htb]
    \centering 
    \caption{Number of boundary conditions for the Navier-Stokes equations.}
    \begin{tabular}{|l | c | c | c |}
\hline
      flow regime & 3-D & 2-D & 1-D \\      \hline
        inflow & 5  & 4 & 3 \\ \hline
         outflow &  4 & 3 &  2\\ \hline
    \end{tabular}\label{tab1}
\end{table}

In the proposed boundary conditions (\ref{b_flux_supin})-(\ref{b_flux_supout}), the viscous flux appears in the boundary condition, which implies that all but the first component are always specified. In the continuity equation the viscous flux is zero and no data is given at outflows. Hence, there is no boundary condition for the continuity equation at outflows. At inflows, boundary data is provided for the first component of the inviscid flux, i.e., all components are specified at inflows. Hence, the  boundary conditions (\ref{b_flux_supin})-(\ref{b_flux_supout}) are consistent with Table \ref{tab1}. 

\subsection{Alternative formulation of the boundary conditions}\label{sec:alt_bc}

To give some further insights into the subsonic boundary conditions, we recast them for a boundary where $x=constant$. In that case, $n^1=1$, $n^2=0$, $v_n=u^*=u$ and $v^*=v_{b\tau} \eqqcolon v_b$, where $v_b$ is the $y$-component of the velocity and the subscript $b$ signifies that, in this case, it is given by data.

First, we consider the subsonic outflow conditions. Inserting \eqref{b_flux_subout} into \eqref{bcs} for an $x=constant$ boundary, leads to
\begin{align*}
\rho u &= \rho u,\\
\rho u^2 +p -f^V_2 &= \rho u^2 + p_b,\\
\rho v u -f^V_3&=\rho v u, \\
u(E+p)-f^V_4&= u(E+p).
\end{align*}
where $f^V_i$, $i=2,3,4$ denote the non-zero components of $\f^V$. The first line does not constitute a boundary condition. The next three simplify to,
\begin{align*}
p -f^V_2 &=  p_b,\\
f^V_3&= 0,  \\
f^V_4&= 0.
\end{align*}
These three boundary conditions are exactly the same as the pressure boundary conditions proven linearly stable in \cite{SvardCarpenter07} (sec. 3.3 and appendix B).

In the same way, the subsonic inflow conditions, in their original form, become
\begin{align*}
\rho u &= \rho_b u,\\
\rho u^2 +p -f^V_2 &= \rho_b u^2 + p_b,\\
\rho uv -f^V_3&=\rho_b u v_b, \\
u(E+p)-f^V_4&= u\left(\frac{1}{2}\rho_b(u^2+v_b^2) + \frac{\gamma p_b}{\gamma-1}\right),
\end{align*}
From the first of these conditions, we deduce that $\rho=\rho_b$. Using this condition in the other three, we obtain the following four equivalent conditions,
\begin{align*}
\rho  &= \rho_b, \\
p -f^V_2 &=  p_b,\\
\rho_b uv -f^V_3&=\rho_b u v_b, \\
u(E+p)-f^V_4&= u\left(\frac{1}{2}\rho_b(u^2+v_b^2) + \frac{\gamma p_b}{\gamma-1}\right).
\end{align*}

\subsection{Non-linear a priori bounds}\label{sec:apriori}

To set the scene for the entropy analysis, we begin by listing a number of well-known definitions and relations:
\begin{align*}
 S&=c_v\ln(p/\rho^\gamma)&  \textrm{specific entropy},\\
 U(\u)& = -\rho S, &  \textrm{strictly convex entropy function},\\
  F(\u)&=-\rho u S,&  \textrm{entropy flux in x-direction},\\
  G(\u)&=-\rho v S,&  \textrm{entropy flux in y-direction},\\
    \psi^x(\u)&= U_{\u} \f^I-F=c_v(\gamma-1)\rho u, & \textrm{entropy flux potential in x-direction},\\
    \psi^y(\u)&= U_{\u} \g^I-G=c_v(\gamma-1)\rho v, &   \textrm{entropy flux potential in y-direction}.
\end{align*}
We also need the entropy variables
    \begin{align*}
 \w^\top&=U_{\u} 
 = \left( -S + c_v\gamma - \frac{u^2 + v^2}{2T},   \frac{u}{T},  \frac{v}{T},  -\frac{1}{T}\right), 
\end{align*}
and that the entropy fluxes by definition satisfy,
\begin{align*}
     U_{\u} \f^I_{\u} = F_{\u}, \quad
  U_{\u} \g^I_{\u} = G_{\u}. 
\end{align*}


The  main result in this paper is given in the following theorem.
\begin{theorem}\label{theo1}
  Admissible solutions to (\ref{eq:NS}) with boundary conditions (\ref{bcs})-(\ref{b_flux_supout}), satisfy the following \emph{a priori} estimates
  \begin{align*}
  \{  \rho,\,
    \rho|\v|^2,\,
    p,\,
    \rho\log(\rho),\,
    \rho\log(T)\}&\in L^{\infty}(0,\T;L^1(\Omega)).
  \end{align*}
  
\end{theorem}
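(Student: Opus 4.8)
The plan is to establish three global balances — for mass, total energy, and the mathematical entropy $U=-\rho S$ — and then to disentangle the five quantities using only the convexity inequalities $x\ln x\ge -1/e$ and $\ln T\le T-1$, never invoking boundedness of $\rho$, $\v$ or $T$ themselves.

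First I would treat mass and energy, which are the easy pillars. Integrating the first component of \eqref{eq:NS} over $\Omega$ and inserting \eqref{bcs}, the only boundary contribution is the first component of the data flux: it is the outflux $\rho v_n\ge 0$ on $\partial\Omega^{out}$ and the datum $\rho_b v_n$ on $\partial\Omega^{in}$. Hence $\diff{}{t}\int_\Omega\rho\le \oint_{\partial\Omega^{in}}\rho_b|v_n|$, which is bounded by the data, and integrating in time yields $\rho\in L^\infty(0,\T;L^1(\Omega))$. For the fourth (energy) component the boundary term is, by \eqref{bcs}, exactly the fourth component of the data flux; on $\partial\Omega^{out}$ this is $v_n(E+p)\ge 0$ (an outflux with a favourable sign), while on $\partial\Omega^{in}$ it is controlled by the data. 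Since $E=\tfrac12\rho|\v|^2+\tfrac{p}{\gamma-1}$ with both summands nonnegative for admissible solutions, $\int_\Omega E$ stays bounded on $[0,\T]$ and therefore $\rho|\v|^2,\,p\in L^\infty(0,\T;L^1(\Omega))$ at once.

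Next comes the entropy, which carries the real difficulty. I would contract \eqref{eq:NS} with $\w^\top=U_{\u}$ and integrate: the inviscid terms produce $\diff{}{t}\int_\Omega U+\oint_{\partial\Omega}(n^1F+n^2G)$, while the viscous terms give, after integration by parts, a sign-definite interior dissipation $-D\le 0$ plus the boundary flux $\oint_{\partial\Omega}\w^\top(n^1\f^V+n^2\g^V)$. The decisive simplification is that \eqref{bcs} lets me substitute $n^1\f^V+n^2\g^V=n^1\f^I+n^2\g^I-[n^1\f_b+n^2\g_b]$, so that no genuine viscous quantity survives on the boundary. Using $\w^\top(n^1\f^I+n^2\g^I)=n^1(F+\psi^x)+n^2(G+\psi^y)$ together with $n^1\psi^x+n^2\psi^y=c_v(\gamma-1)\rho v_n$, the two entropy-flux terms cancel and I am left with
\[
\diff{}{t}\int_\Omega U = \oint_{\partial\Omega}\Big(c_v(\gamma-1)\rho v_n-\w^\top[n^1\f_b+n^2\g_b]\Big) - D.
\]
Dropping $-D\le 0$, it remains to bound this boundary functional from above on each of the four regimes \eqref{b_flux_supin}--\eqref{b_flux_supout}. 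Because the data fluxes coincide with those of the Euler system, this reduces to the boundary computation of \cite{Svard25}. I expect \emph{this} to be the main obstacle of the whole argument: the bound must follow from the algebraic structure of the data fluxes and the positivity of $\rho,p,T$ alone, with no a priori control of the size of the solution at the boundary (the supersonic-outflow regime, where the physical entropy outflux $\rho v_n S$ appears unmodified, is the most delicate). Granting it, one obtains $\int_\Omega U(\cdot,t)\le C$ for all $t\in[0,\T]$.

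Finally I would separate $\rho\ln\rho$ and $\rho\ln T$ from this single upper bound. Writing $p=\rho RT$ gives the pointwise identity $U/c_v+\rho\ln R=(\gamma-1)\rho\ln\rho-\rho\ln T$. I decompose each factor into positive and negative parts and use $(\rho\ln\rho)^-\le 1/e$ (from $x\ln x\ge-1/e$) and $(\rho\ln T)^+\le p/R$ (from $\ln T\le T-1$, so $\rho\ln T\le p/R-\rho$). Rearranging the integrated identity,
\[
(\gamma-1)\!\int_\Omega(\rho\ln\rho)^+ + \!\int_\Omega(\rho\ln T)^- = \tfrac1{c_v}\!\int_\Omega U +\ln R\!\int_\Omega\rho + (\gamma-1)\!\int_\Omega(\rho\ln\rho)^- + \!\int_\Omega(\rho\ln T)^+ ,
\]
whose right-hand side is bounded by the entropy, mass and energy estimates already secured. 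Since the two terms on the left are nonnegative, each is bounded individually; combining them with the elementary bounds on $(\rho\ln\rho)^-$ and $(\rho\ln T)^+$ gives $\rho\ln\rho,\,\rho\ln T\in L^\infty(0,\T;L^1(\Omega))$, which would complete the proof.
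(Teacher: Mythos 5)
Your proposal follows essentially the same route as the paper: global mass and energy balances with sign-definite outflow terms and data-bounded inflow terms, an entropy balance in which \eqref{bcs} is inserted so that only the data flux and the potentials $\psi^{x,y}$ survive on the boundary (your displayed identity is exactly the paper's \eqref{ent_est}), deferral of the boundary-term estimate to the Euler computation of \cite{Svard25}, and finally the extraction of $\rho\ln\rho$ and $\rho\ln T$, which you carry out explicitly via the positive/negative-part decomposition where the paper simply cites \cite{Svard22}; that last step is correct. One correction to how you frame the deferred step: the entropy boundary functional at outflows is \emph{not} controlled by the algebraic structure of the data fluxes and positivity of $\rho,p,T$ alone. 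At a supersonic outflow the contribution is the unmodified physical entropy flux $\rho v_n S$, and bounding its time integral requires the a priori bounds on the time-integrated \emph{outgoing} mass and energy fluxes, $\int_0^\T\int_{\partial\Omega^{out}}\rho\v\cdot\n\,ds\,dt\leq\C$ and $\int_0^\T\int_{\partial\Omega^{out}}(E+p)\v\cdot\n\,ds\,dt\leq\C$, which the paper records as \eqref{b_est_1} and \eqref{b_est_2} precisely for this purpose. You obtain these as by-products of your first two balances, but you should state and retain them explicitly, since without them the step you are ``granting'' has no foundation even in \cite{Svard25}.
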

\begin{proof}
  The proof is a straightforward generalisation of that in \cite{Svard25}. We begin with  the continuity equation,
\begin{align*}
\partial_t  \int_\Omega \rho \, d\Omega = -\int_{\partial \Omega } (\rho \v)\cdot \n\,ds.
\end{align*}
Noting that $\rho>0$, the integral on the left-hand side is $\|\rho\|_{L^1(\Omega)}$ and we obtain
\begin{align}
\partial_t  \|\rho\|_{L^1(\Omega)} + \int_{\partial \Omega^{out} } (\rho \v)\cdot \n\,ds= -\int_{\partial \Omega^{in} } (\rho \v)\cdot \n\,ds.\label{rho_est}
\end{align}
The boundary integral on the left-hand side is positive since $\v\cdot \n>0$ on outflows. Furthermore, at supersonic inflows the integrand on the right-hand side is fully given by data. At subsonic inflows, the normal velocity is bounded by the sound speed, which in turn is given by data since both density and pressure are. Integration in time yields the desired $L^\infty(0,\T;L^1(\Omega))$ bound on $\rho$. Furthermore, we obtain a bound on the boundary integral,
\begin{equation}\label{b_est_1}
\begin{aligned}
\int_0^\T  \int_{\partial \Omega^{out} }\rho\v\cdot \n\,ds dt\leq \C.
\end{aligned}
\end{equation}

Similarly,  the energy equation yields
\begin{align*}
\partial_t  \|E\|_{L^1{(\Omega)}}+\int_{\partial \Omega^{out} }(E+p)\v\cdot \n-(\f^{V}_4,\g^{V}_4)\cdot \n \,ds &=\\
-\int_{\partial \Omega^{in} }(E+p)\v\cdot \n-(\f^{V}_4,\g^{V}_4)\cdot \n\,\,ds.
\end{align*}
At both subsonic and supersonic outflow boundaries, the boundary condition is equivalent to setting the normal component of the viscous flux to zero in the energy equation. This leaves a positive boundary integrand on the left. Using the inflow conditions, we obtain
\begin{align}
  \partial_t  \|E\|_{L^1{(\Omega)}}+\int_{\partial \Omega^{out} }(E+p)\v\cdot \n\,ds &=\nonumber \\
  -\int_{\partial \Omega^{sup.in} }(E_b+p_b)\v_b\cdot \n\,ds-\int_{\partial \Omega^{sub.in} }(E^*+p_b)\v\cdot \n\,ds.
  \label{E_est}
\end{align}
Since at subsonic inflows $\v\cdot \n$ is bounded by $c_b$, we see that both integrals on the right are bounded. Upon integration in time, we conclude that $\|\rho|\v|^2\|_{L^1(\Omega)}$ and $\|p\|_{L^1(\Omega)}$ are bounded for all $t\in[0,\T]$, and also that
\begin{equation}\label{b_est_2}
\begin{aligned}
\int_0^\T  \int_{\partial \Omega^{out} }(E+p)\v\cdot \n\,ds dt\leq \C.
\end{aligned}
\end{equation}

Next, we turn to the entropy,
\begin{align*}
  \int_\Omega \w^\top \pdiff{\u}{t} \: d\Omega + \int_\Omega \w^\top \left( \pdiff{\f^I}{x} + \pdiff{\g^I}{y} \right) \: d\Omega & =  \int_\Omega \w^\top \left( \pdiff{\f^V}{x} + \pdiff{\g^V}{y} \right) \: d\Omega. &
\end{align*}
We use that $\w^\top \pdiff{\f^I}{x} = F_x=(\w^\top\f-\psi^x)_x$ and obtain
\begin{align*}
	\int_\Omega \pdiff{U}{t} \: d\Omega +\int_{\Omega}\w_x^\top \f^V+\w_y^\top\g^Vd\Omega&=\\
    - \int_{\partial \Omega} \w^\top\left(\left(\f^I-\f^V,\g^I-\g^V\right) \cdot \n \right)-(\psi^x,\psi^y)\cdot\n\: ds.
\end{align*}
Insert the boundary condition,
\begin{align*}
\f_b=\left(\f^I-\f^V,\g^I-\g^V\right) \cdot \n,
\end{align*}
to obtain
\begin{align}
	\int_\Omega \pdiff{U}{t} \: d\Omega +\int_{\Omega}\w_x^T\f^V+\w_y^T\g^Vd\Omega =- \int_{\partial \Omega} \w^\top\f_b-(\Psi^x,\Psi^y)\cdot\n\: ds\label{ent_est}
\end{align}
Since the second term on the left-hand side is the non-negative entropy dissipation (see e.g. \cite{SvardCarpenter18}, Eqn (8)-(14)), a bound is obtained if the right-hand side is controlled. We note that $\f_b$ is exactly the same as in \cite{Svard25} (see Eqn. (21)-(23)). Hence, the right-most integrand is exactly the same as (30) in \cite{Svard25} where, with the help of (\ref{b_est_1}) and (\ref{b_est_2}), it was shown to be bounded. We omit the remaining details. From the bounds (\ref{rho_est}), (\ref{E_est}), (\ref{ent_est}) and positivity, the bounds on $\rho \log\rho$ and $\rho \log T$ follow in the same way as in \cite{Svard22}.


\end{proof}

\subsection{ Linear well-posedness}
We have currently not succeeded in proving that the proposed boundary conditions are also linearly well-posed. Specifically, it is the subsonic inflow case that is challenging. 

First, the supersonic inflow and supersonic outflow are the same as 
in \cite{SvardCarpenter07} and therefore linearly well-posed. Second, as discussed in Section \ref{sec:alt_bc}, the subsonic outflow is the same as the pressure outflow condition proven to be linearly well-posed in \cite{SvardCarpenter07}.

As for the subsonic inflow, we observe that since inflows require 5 boundary conditions in three space dimensions, the supersonic inflow condition can be used in the subsonic inflow case as well, resulting in both linear well-posedness and the non-linear bounds. However, in this alternative the inflow velocity is specified, which does not appear to allow smooth transitions between inflow and outflow; the transition from outflow to inflow occurs when the normal velocity traverses zero, and then it is abruptly set to a non-zero value in the boundary flux. Furthermore, the use of the supersonic inflow condition at subsonic inflows will not lead to the Euler conditions (\cite{Svard25}) as $\mu\rightarrow 0$. For these reasons, we have not analysed this option further. 


\section{Finite-volume approximation scheme}

To mimic the continuous estimates and obtain the density and energy bounds, we use a numerical method approximating the divergence form of the equation. Moreover, it must satisfy the summation-by-parts property (\cite{SvardNordstrom14}) to produce the correct boundary terms in the estimates. Finally, the scheme must be entropy stable at all interior points. 

\begin{remark}
  In \cite{Svard25} it was remarked that the divergence requirement can be relaxed if the sole goal is to obtain the a priori estimates. However, to approximate physically correct weak solutions, the divergence form is necessary (\cite{HouLeFloch94,LaxWendroff60}).
  Local entropy stability can also be relaxed to the global counterpart at the expense of possible local violations of the second law of thermodynamics.

  Finally, we do not consider entropy-variable approximations of the viscous terms. Compared with approximations in the conservative variables, such approximations require much stronger a priori estimates to infer weak convergence and we do not consider them a viable path towards non-linearly convergent schemes.
\end{remark}

Since we propose a new boundary treatment for in- and outflows, there is no need for an excessively sophisticated interior scheme; that would only complicate notation. The simplest possible scheme meeting our requirements is a node-centred finite-volume scheme. Since second-derivatives are challenging to define in a consistent way on unstructured grids \cite{Eymard_etal97,SvardGong07,SvardNordstrom04}, we use a Cartesian one. 

\subsection{Preliminaries}

For the domain $\Omega=[0,L_x]\times [0,L_y]$, we introduce a set of grid points, or equivalently nodes, $(x_k,y_l)$, $x_k=kh_x$, $y_l=lh_y$, $k,l=0...N,M$, where $h_x=L_x/N$ and $h_y=L_y/M$ are the grid spacings.  The dual volume, $\Omega_{i,j}$, for grid point $i,j$ is 
\begin{align*}
  (x,y)\in \Omega_{i,j}\quad \textrm{if}\quad\quad&  ih_x-h_x/2< x\leq ih_x+h_x/2,\\
&  jh_y-h_y/2< y\leq jh_y+h_y/2,\\
&  1\leq i\leq N-1,\quad 1\leq j\leq M-1
\end{align*}
At a boundary point, $\Omega_{i,j}$ is delimited by $\partial \Omega$. Thus,
\begin{align*}
  (x,y)\in \Omega_{0,j}, \quad \textrm{if} \quad \quad &0\leq x\leq h_x/2,\\
  &  jh_y-h_y/2< y\leq jh_y+h_y/2,\\
  & 1\leq j\leq M-1
\end{align*}
and similarly at the other three boundaries. In the same way, we have
\begin{align*}
(x,y)\in \Omega_{0,0}, \quad \textrm{if} \quad 0\leq x\leq h_x/2,\ 0\leq y\leq h_y/2,
\end{align*}
at the bottom-left corner and similarly in the other three corners. The measure of $\Omega_{i,j}$ is denoted $V_{i,j}$.

To be able to write the scheme on standard finite-volume form, we map the Cartesian indices to a single consecutive index,
\begin{align}
i=kN+lM, 
\end{align}
such that $i\in \{0,1,2,...,NM\}$. All double-index entities are then recast to a single index using this mapping, e.g, $\Omega_{k,l}\to \Omega_i$. This allows us to use two indices for quantities associated with an edge between two points $i$ and $j$. In a Cartesian mesh, the dual volumes are all rectangles. Circling around the dual-volume boundary counter-clockwise, we define the signed coordinate differences $\Delta y_{ij}$ and  $\Delta x_{ij}$ for the dual-volume side associated the edge between $i$ and its neighbour $j$. Since the grid is Cartesian, the left and right $\Delta y_{ij}$ will be non-zero and $\Delta x_{ij}$ are zero; at the top and bottom it is the other way around.
\begin{remark}
We use a comma $k,l$ when referring to the Cartesian indexing and no comma, $ij$, for entities associated with an edge between grid point $i$ and $j$.
\end{remark}
The variables are associated with a node $i$, which is denoted with a subscript $i$. For instance, the density and speed of sound at node $i$ are denoted as $\rho_i$ and $c_i$. By extending the point value to a constant in $\Omega_{i}$, we obtain a piecewise constant numerical solution on $\Omega$.

At boundary points, the boundary of the dual volume coincides with $\partial \Omega$. The corresponding signed differences for the dual-volume boundary are denoted $\Delta x_{ii}$, $\Delta y_{ii}$. Note that this works at corners as well where $\Delta x_{00}=h_x/2$ for the lower part (where $\Delta y$ is zero), and $\Delta y_{00}=-h_y/2$ for the left part (where $\Delta x$ is zero); the boundary flux data vector will be different for the two contributions since they represent different boundaries. If $(x_i,y_i)$ is not a boundary point, these differences are identically zero.

The unit normal associated with the external boundary at $(x_i,y_i)$ is defined as,
\begin{align*}
 \n_{ii}= (n^1,n^2)_{ii}&=(\Delta y_{ii},-\Delta x_{ii})/dl_{ii},\quad\quad
  dl_{ii}= \sqrt{\Delta y_{ii}^2+\Delta x_{ii}^2}
\end{align*}
Furthermore, the normal velocity at a boundary point is given by
\begin{align*}
(v_n)_{ii}=u_{i} n^1_{ii}+v_{i}n^2_{ii},
\end{align*}
and $\btau_{ii}=(-n^2_{ii},n^1_{ii})$ is the unit tangential vector, such that
the tangential velocity is
\begin{align*}
  (v_\tau)_{ii}=-u_{i} n_{ii}^2+v_{i}n_{ii}^1.
\end{align*}

 \begin{figure}[ht]
\centering
\includegraphics[width=\textwidth]{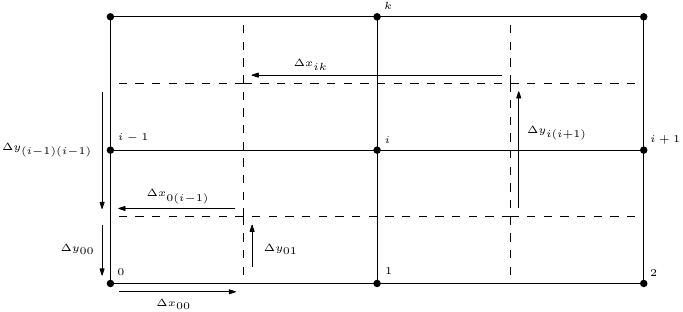}
\caption{Depiction of a grid. The solid lines mark the primary grid and the dashed lines the dual grid.}\label{fig:grid}
\end{figure}

$N_i$ is the set of indices of nodes that are connected to $(x_i,y_i)$ by an edge. If $i$ is a boundary node, then the index $i$ is also included in $N_i$ and represents the contribution from the physical boundary. Furthermore,  the set of all points is denoted $\N$, and the set of boundary points is denoted as $\B$.

For  a boundary point $i\in\B$, we characterise the flow as
\begin{align*}
  (v_n)_{i}<0, \quad & |(v_n)_{i}|\geq c_b(x_i,y_i,t) &\Rightarrow&\quad \textrm{supersonic inflow},\\
  (v_n)_{i}<0, \quad &|(v_n)_{i}|< c_b(x_i,y_i,t) &\Rightarrow& \quad\textrm{subsonic inflow},\\
  (v_n)_{i}>0, \quad &|(v_n)_{i}|< c_{i} &\Rightarrow&\quad \textrm{subsonic outflow,}\\
  (v_n)_{i}>0, \quad & |(v_n)_{i}|\geq c_{i} &\Rightarrow& \quad\textrm{supersonic outflow}.\\
\end{align*}
At time $t$, we define $\B^{out}$ to be the set of all outflow points and   $\B^{in}$ the set of all inflow points. These sets are further split in $\B^{sup.in}$, $\B^{sub.in}$, $\B^{sub.out}$ and $\B^{sub.out}$ according to the above rules.

\begin{remark}
  Since the sides of the Cartesian grid may be subject to different boundary data, one may keep track of each side of the corner cell and impose the correct data. (See also discussion in Sec. \ref{sec:comp}.) However, seeing the corner cell as entirely belonging to one or the other boundary will not formally degrade the accuracy of the  scheme. This is inevitable in some cases, e.g. if the left boundary is a wall where the no-slip is imposed strongly, and the bottom is an outlet. Then the point with index $i=0$ (low-left corner) is either considered to be a wall point, in which case the boundary fluxes are unnecessary when updating the velocity since it is set strongly. If it is not considered to be the wall, then the entire corner cell is treated like an inflow. Finally, note that these remarks are, by and large, independent of the order of accuracy of the scheme. To achieve high-order accuracy, the solution must be highly regular, which requires data to be compatible to a high degree of approximation, which is unlikely to be the case at corners between inflows and walls. 
\end{remark}

We will use the viscous approximation scheme proposed in \cite{SvardCarpenter18}, and also used in \cite{GjestelandSvard22}. To define this scheme, we need some further notation. We also remark that we only need flux approximations at the interior points. At the boundary the total flux is given by the data flux vector. Thus, using the Cartesian indexing, we have
\begin{align*}
  D^xa_k&=D^xa_{i,j}=\frac{a_{i+1,j}-a_{i-1,j}}{2h_x},\quad 1\leq i\leq N-1,\, 0\leq j\leq M,
\end{align*}
and similarly in the $y$-direction.

Furthermore, we define the mean values,
\begin{align*}
  \overset{x}{a}_k=\overset{x}{a}_{i,j}=\frac{a_{i+1,j}+a_{i-1,j}}{2},\quad 1\leq i\leq N-1,\, 0\leq j\leq M,\\
  \overset{y}{a}_k=\overset{y}{a}_{i,j}=\frac{a_{i,j+1}+a_{i,j-1}}{2},\quad 0\leq i\leq N,\, 1\leq j\leq M-1.\\
\end{align*}
The following approximations of "1" will also be used,
\begin{align*}
(1_x)_k=(1_x)_{i,j}=\frac{T^{-1}_{i,j}}{\overset{x}T^{-1}_{i,j}}, \quad\quad
(1_y)_k=(1_y)_{i,j}=\frac{T^{-1}_{i,j}}{\overset{y}T^{-1}_{i,j}}.
\end{align*}
 These are necessary to use in the numerical viscous fluxes given below, to allow for the viscous terms to form a positive definite term in the numerical entropy estimate. (See \cite{SvardCarpenter18} for further details.)


\subsection{ The finite-volume scheme}

Turning to the numerical approximation of \eqref{eq:NS}, we use a standard node-centred scheme (see \cite{NordstromForsberg03}),
\begin{align}
V_{i}(\u_{i})_t + \left[ \sum_{j \in N_i} (\f_{ij}^I-\f_{ij}^V) \Delta y_{ij} \right]- \left[ \sum_{j \in N_i} (\g^I_{ij}-\g_{ij}^V) \Delta x_{ij}  \right], & \quad i \in\N, \label{scheme}
\end{align}
where $\u$ is the full solution vector, and $\u_{i}$ a 4-component subvector holding the approximate solution at  $(x_i,y_i,t)$. Furthermore, $\bsff^{I,V}_{ij}$ and $\bsfg^{I,V}_{ij}$ are consistent approximations of the inviscid and viscous fluxes. We require that the inviscid fluxes between two nodes ($i\neq j$) are entropy-stable approximations satisfying
\begin{equation}\label{shuffle}
	\begin{aligned}
 \tfrac{\bw^\top_j - \bw^\top_i}{2} \f^I_{ij} \Delta y_{ij} - \tfrac{\psi^x_{j} - \psi^x_{i}}{2} \Delta y_{ij}  \leq 0,  \\
		 - \tfrac{\bw^\top_j - \bw^\top_i}{2} \g^I_{ij} \Delta x_{ij} + \tfrac{\psi^y_{j} - \psi^y_{i}}{2} \Delta x_{ij}  \leq 0.
	\end{aligned}
\end{equation}
where $\bw_i=\bw(\u_i)$ are the entropy variables.  (See  \cite{Tadmor03,IsmailRoe09,Chandrashekar13,Svard22} for examples of such fluxes.)

The approximations of the viscous fluxes were developed in \cite{SvardCarpenter18} for a Cartesian finite-difference scheme,
\begin{align*}
  \f^V_{i}=(0,1_x\tau_{xx},1_x\tau_{xy},1_x(\overset{x}{u} \tau_{xx}+\overset{x}{v}\tau_{xy}) + \kappa D_xT)^T_{i},\\
  \g^V_{i}=(0,1_y\tau_{xy},1_y\tau_{yy},1_y(\overset{y}{u} \tau_{xy}+\overset{y}{v}\tau_{yy}) +\kappa D_y T)^T_{i},
\end{align*}
where, at the point $i$,
\begin{align*}
  \tau_{xx}&=\mu (\frac{4}{3}D^xu-\frac{2}{3}D^yv), \\
  \tau_{xy}&=\mu(D^yu+D^xv), \\
  \tau_{yy}&=\mu (\frac{4}{3}D^yv-\frac{2}{3}D^xu). \\
\end{align*}
As in the continuous equations, we have invoked Stokes' hypothesis in the formulae above. The viscous fluxes along an edge are subsequently computed as the mean values,
\begin{align*}
  \f_{ij}^V=\frac{\f^V_{i}+\f^V_{j}}{2},\\
  \g_{ij}^V=\frac{\g^V_{i}+\g^V_{j}}{2}.\\
\end{align*}
Analogous to the continuous case, we define at subsonic inflow boundary points a velocity where the tangential component is given by data,
\begin{align*}
  (u^*)_i=(v_n)_in^1_{ii}-v_{b\tau}(x_i,y_i,t)  n^2_{ii},\\
  (v^*)_i=(v_n)_in^2_{ii}+v_{b\tau}(x_i,y_i,t)n^1_{ii}.
\end{align*}

At the boundaries, we set,
\begin{align}
(n^1(\f^I-\f^V)+n^2(\g^I-\g^V))_{ii}=(n^1\f_b+n^2\g_b)_{ii}.\label{num_bc}
\end{align}
The discrete boundary data fluxes $(n^1\f_{b}+n^2\g_{b})_{ii}$  are constructed in the same way as the continuous ones. 
\begin{remark}
Note that there is no need to define an approximation of the normal boundary 
flux (left-hand side of \eqref{num_bc}), since we simply insert the right-hand side of \eqref{num_bc} in the scheme at the boundary points. 
\end{remark}

For the time and space dependent boundary data, we use the notation $(\rho_b)_i=\rho_b(x_i,y_i,t)$, $i\in \B^{in}$, and analogously for the other boundary-data functions. We also use the superscripts $\rho$ and $E$ to denote the first and last component of the flux-approximation vector.

$\textbf{Supersonic inflow:}$
\begin{equation}\label{b_flux_supin_disc}
  \begin{aligned} 
(n^1\f_b+n^2\g_b)_{ii}=&\phantom{+}n^1_{ii}\left(\rho_bu_b,\rho_bu_b^2+p_b,\rho u_bv_b,u_b(E_b+p_b) \right)^\top\Big|_{(x_i,y_i,t)}\\
&+n^2_{ii}\left(\rho_bv_b,\rho_bu_bv_b,\rho v_b^2+p_b,v_b(E_b+p_b) \right)^\top\Big|_{(x_i,y_i,t)},\\
E_b(x_i,y_i,t)=&\,\frac{1}{2}\rho_b(u_b^2+v_b^2)+\frac{p_b}{\gamma-1}\Big|_{(x_i,y_i,t)}.
\end{aligned}
\end{equation}

$   \textbf{Subsonic inflow:}$
\begin{equation}\label{b_flux_subin_disc}
  \begin{aligned}
 (n^1\f_b+n^2\g_b)_{ii}   &=
    \left(\rho_bv_n,v_n\rho_bu^*+n^1p_b,v_n\rho_b v^*+n^2p_b,v_n(E^*+p_b) \right)^\top\Big|_{(x_i,y_i,t)},\\
    E^*_{ii}&=\frac{1}{2}\rho_b(v_n^2+v_{b\tau}^2)+\frac{p_b}{\gamma-1}\Big|_{(x_i,y_i,t)}.
\end{aligned}
\end{equation}

 $   \textbf{Subsonic outflow:}$
\begin{equation}\label{b_flux_subout_disc}
  \begin{aligned}
(\f_{b})_{ii}&=\left(\rho u,\rho u^2+p_b,\rho uv,u(E+p) \right)^\top\Big|_{(x_i,y_i,t)},\\
(\g_{b})_{ii}&=\left(\rho v,\rho uv,\rho v^2+p_b,v(E+p) \right)^\top\Big|_{(x_i,y_i,t)}.
\end{aligned}
\end{equation}

$    \textbf{Supersonic outflow:}$
\begin{equation}\label{b_flux_supout_disc}
  \begin{aligned}
(\f_{b})_{ii}&=\left(\rho u_,\rho u^2+p,\rho uv,u(E+p) \right)^\top\Big|_{(x_i,y_i,t)},\\
(\g_{b})_{ii}&=\left(\rho v,\rho uv,\rho v^2+p,v(E+p) \right)^\top\Big|_{(x_i,y_i,t)}.
\end{aligned}
\end{equation}
Note that the variables at the boundary points are updated by the scheme; providing data for the boundary fluxes constitutes a weak imposition of the boundary conditions. 
\begin{remark}
These boundary conditions reduce to the Euler conditions in \cite{Svard25}, if $\mu=\kappa=0$. 
\end{remark}

\subsection{A priori estimates}


\subsubsection{Mass balance}

The mass balance is obtained by summing the continuity equation over all grid points. 
\begin{align}
\sum_{i= 1}^\N V_i(\rho_i)_t +\sum_{i\in\B^{out}} \left(\f^\rho_{b,ii}\Delta y_{ii}-\g^\rho_{b,ii}\Delta x_{ii}\right)=\sum_{i\in\B^{in}} \left(-\f^\rho_{b,ii}\Delta y_{ii}+\g^\rho_{b,ii}\Delta x_{ii}\right).\label{rho_balance}
\end{align}
The right-hand side is a sum over the inflow points where $(v_n)_{ii}\leq 0$.
We split the right-hand side into two sums, one for $\B^{sup.in}$ and one for $\B^{sub.in}$. Beginning with the subsonic inflow points, we have $\f^\rho_{b,ii}=(\rho_b)_iu_i$ and  $\g^\rho_{b,ii}=(\rho_b)_i(v_n)_{ii}$ leading to
\begin{align*}
 -\f^\rho_{b,ii}\Delta y_{ii}+\g^\rho_{b,ii}\Delta x_{ii} = -(\rho_b)_i(v_n)_{ii}dl_{ii},\quad i\in \B^{sub.in}.
\end{align*}
At these points, the right-hand side is positive, but $|(v_n)|_{ii}\leq c_{b,i}$, $(i\in\B^{sub.in})$, leading to the desired bound. In the supersonic case $(i\in\B^{sup.in})$, $(v_n)_{ii}$ is also specified  by data, immediately leading to a bound.

At outflow points ($i\in \B^{out}$), $(v_n)_{ii}\geq 0$ (and $\rho_i>0$) implying that the outflow boundary sum is non-negative. Hence, we obtain the following a priori bound,
\begin{align}
\int_0^T\sum_{i\in\B^{out}} \left(\f_{b,ii}^\rho\Delta y_{ii}-\g_{b,ii}^\rho\Delta x_{ii}\right)\,dt\leq \C,\label{m_boundary}
\end{align}
along with an $L^1$ bound on density for all $t$.

\subsubsection{Energy balance}
Turning to the energy balance, we have
\begin{align*}
\sum_{i=1}^\N V_i(E_i)_t+\sum_{i\in\B^{out}} \left(\f^E_{b,ii}\Delta y_{ii}-\g^E_{b,ii}\Delta x_{ii}\right) &=\\
\sum_{i\in\B^{in}} \left(-\f^E_{b,ii}\Delta y_{ii}+\g^E_{b,ii}\Delta x_{ii}\right).
\end{align*}
At supersonic inflow points, the fluxes are completely defined by the data vector, such that the left-hand side is bounded for all $i\in \B^{sup.in}$. At the subsonic inflow ($i\in \B^{sub.in}$), the normal velocity is not given by data but satisfy $|(v_n)|_{ii}\leq c_{b,i}$, and we deduce from (\ref{b_flux_subin_disc}) that
\begin{align*}
 -\f^E_{b,ii}\Delta y_{ii}+\g^E_{b,ii}\Delta x_{ii} = -(v_n)_{ii}dl_{ii}\left(\frac{1}{2}\rho_{i}((v_n)_{ii}^2+(v_{b\tau})_{ii}^2)+ \frac{\gamma p_b}{\gamma-1}\right)\leq \C dl_{ii}.
\end{align*}
At both types of outflow, no boundary data is used to construct the flux. However, the terms are all positive implying that,
\begin{align}
\int_0^T\sum_{i\in\B^{out}} \left(\f_{b,ii}^E\Delta y_{ii}-\g_{b,ii}^E\Delta x_{ii}\right)\,dt\leq \C,\label{E_boundary}
\end{align}
and we also collect an $L^1$ bound on $E$ for all $t$. 

\subsubsection{Entropy balance}

The numerical approximations of entropy potentials are $\psi^{x,y}_i=\psi^{x,y}(\u_i)$  at all grid points, $i\in \N$, and the entropy flux approximations associated with the edges are
\begin{equation}\label{ent_flux}
  \begin{aligned}
    F_{ij} &= \tfrac{\bw^\top_j + \bw^\top_i}{2}\f^I_{ij}-\frac{\psi^x_i+\psi^x_j}{2},\\
    G_{ij}&= \tfrac{\bw^\top_j + \bw^\top_i}{2}\g^I_{ij}-\frac{\psi^y_i+\psi^y_j}{2}.
  \end{aligned}
\end{equation}
The entropy balance is obtained by  multiplying \eqref{scheme} by $\bw_i^\top$ and summing over all points. Using \eqref{ent_flux} at all interior edges, the shuffle condition \eqref{shuffle}, and the numerical boundary flux (\ref{num_bc}) , we arrive at
\begin{align}
\sum_{i= N}^NV_iU(\u_i)_t  +\sum_{i\in \B}\left(\left(\bw^\top_i\f_{b,ii}-\psi^x_{i}\right)\Delta y_{ii}-\left(\bw^\top_i\g_{b,ii}-\psi^y_{i}\right)\Delta x_{ii}\right) \leq DIFF\label{ent_balance}.
\end{align}
where
\begin{align*}
DIFF = \sum_{i\in \N}\sum_{j\in \N_i\cap i} \Delta \bw^\top_{ij}(\f^V_{ij}\Delta y_{ij}-\g^V_{ij}\Delta x_{ij}).
\end{align*}
and $\Delta \bw_{ij}=\bw_j-\bw_i$.
Since the grid is Cartesian and the viscous approximations are exactly the same as in \cite{SvardCarpenter18}, it follows (from that paper) that $DIFF\leq 0$. 

Hence, stability follows if the boundary terms,
\begin{align}
  (n^1)_{ii}(\bw_i^\top\f_{b,ii}-\psi^x_i)+  (n^2)_{ii}(\bw_i^\top\g_{b,ii}-\psi^y_i)\geq \C,\label{disc_bt}
\end{align}
are bounded from below, which was shown in \cite{Svard25}. We conclude that $\sum_{i= 1}^\N V_iU(\u_i)_t\leq \C$.

From positivity and the bounds on entropy, energy and  density,  we obtain the following semi-discrete a priori bounds
\begin{align*}
\{ \rho, \rho \log \rho\, \rho |\v|^2, \rho \log T, p\}\in L^{\infty}(0,\T,L^1(\Omega)).
\end{align*}

\subsection{Other entropy-stable schemes}\label{sec:other_schemes}

Here, we have chosen to present the boundary conditions, and numerical boundary conditions using a conservative entropy-stable finite-volume scheme with the standard (conservative-variable) form of the viscous terms.

However, the boundary procedures are largely decoupled from the interior discretisations as long as the latter satisfy a few properties. 
\begin{itemize}
\item The scheme must preserve positivity of $\rho$ and $p$.
\item The scheme must be on divergence (conservation form). This ensures the $L^1$ estimates on $\rho$ and $E$. 
\item The inviscid flux approximations must be entropy stable and produce the same boundary terms in the entropy balance as the current finite volume scheme. The viscous approximations must be entropy dissipative in the interior (i.e., $DIFF\leq 0$) and produce the same boundary terms as the current scheme.  
\end{itemize}
As noted in \cite{Svard25}, the requirement that the scheme is on divergence form  can be omitted as long as the other two are met and one is only concerned with the approximation of \emph{smooth solutions}. However, note that for smooth solutions, linear stability is sufficient for convergence (see \cite{Strang64}).


\subsection{Consistency with no-slip wall}\label{sec:no-slip}

It is also important that inflow and outflow boundary conditions are compatible with other boundary procedures. This is indeed the case, since for schemes satisfying the above list of properties, each boundary point (actually boundary face) contributes with a term to the estimates that is independent of the terms from all other boundary points/faces. Hence, boundary points can be assigned as walls or in/outflows independently.

In particular, the provably entropy-stable strong imposition of the no-slip boundary condition along with a weak imposition of the adiabatic wall condition proposed in \cite{GjestelandSvard22} is seamlessly compatible with the boundary conditions proposed in this paper.

\section{Numerical experiments}


\subsection{Vortex impinging outflow boundary}
To test the boundary conditions, we introduce a vortex in the domain and let it travel towards the outflow boundary. The vortex is a proxy for typical flow structures that are produced by objects in a freestream and subsequently convected downstream. The particular vortex we use is found in \cite{ErlebacherHussaini97}. It is an analytical and isentropic solution to the compressible Euler equations. Here, we use it as initial data and as the computation is running it is diffused by the parabolic terms, i.e., the flow is not isentropic. 

The initial data is given by,
\begin{align*}
  f(x,y)&=\frac{(x-x_c)^2+(y-y_c)^2}{\R^2},\\
  \Delta u    &= -\beta  \frac{y-y_c}{\R} e^{-f/2},\\
  \Delta v    &=  \beta \frac{x-x_c}{\R}  e^ {-f/2},\\
  \Delta T    &= \frac{1}{2c_p}\beta^2e^{-f},\\
  \rho & = \rho_\infty\left(\frac{T_\infty-\Delta T}{T_\infty}\right)^{1/(\gamma-1)},\\
  u &= (u_\infty+\Delta u),\\
   v &= (v_\infty+\Delta v), \\
    E        &= R\rho\frac{T_\infty-\Delta T}{\gamma-1} +\frac{1}{2}  \rho(u^2+v^2),\\
%
%
%
\end{align*}
where $\R$ scales the size of the vortex and $\beta$ its strength. Furthermore, $(x_c,y_c)$ marks the centre of the vortex, and $u_\infty$ and $v_\infty$ are the velocity components of the freestream. A vortex located at $(x_0,y_0)$ at $t=0$ is advected by the freestream, such that
\begin{align*}
  x_c(t)&=x_0+u_\infty t,\\
  y_c(t)&=y_0+v_\infty t.
\end{align*}
In these computations, the gas constant is, $R=1/\gamma$ and we use the freestream values $\rho_\infty=1$ and $T_\infty=1$ implying that the freestream speed of sound is $c_{\infty}=1$. The direction of the freestream is specified by an angle $\alpha$ against the $x$-axis.  { Hence, $u_\infty=U_\infty\cos\alpha$ and $v_\infty=U_\infty\sin\alpha$ where $U_\infty$ is the freestream speed. } In these numerical experiments, we use
\begin{align*}
\R=0.1,\quad \beta=1.0,\quad \alpha=\pi/4.
\end{align*}
Furthermore, we use $\mu=0.001$. To demonstrate the robustness of the boundary conditions, viscous effects should be significant; otherwise we are essentially testing the Euler case. However, a too large $\mu$ will completely diffuse the vortex before it reaches the boundary. Our choice strikes a balance between the two effects on the computational grids we use. 


The computational domain is the unit square ($[0.1] \times [0,1]$) discretised with $N^2$ equidistant points. In all computations, we use the inviscid fluxes proposed in \cite{Svard22} that were also used in \cite{Svard24_2,Svard25}.

In the first experiment, we initiate the vortex in the centre of the square and use a diagonal freestream ($\alpha=\pi/4$) with $Ma=0.1$, { leading to $U_\infty=0.1$}; the initial data for density is depicted in the top-left panel of Fig. \ref{fig:diag}. We run the scheme with a CFL number of $0.5$. In Fig. \ref{fig:diag}, numerical solutions at $t=15.0$ are shown for three different grids. (With $N=100$ the time step is $\Delta t= 0.0045454$; $N=200$, $\Delta t = 0.002272$; $N=400$, $\Delta t = 0.001136$.)
\begin{figure}[ht]
\centering
\subfigure[$\rho$ at $t=0$, $N=100$]{\includegraphics[width=6cm]{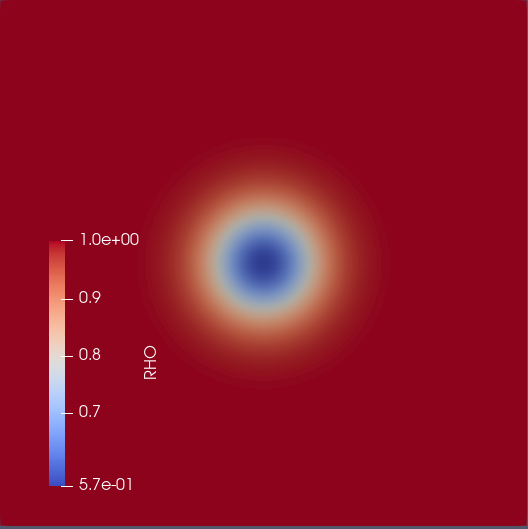}}
\subfigure[$\rho$ at $t=15.0$, $N=100$]{\includegraphics[width=6cm]{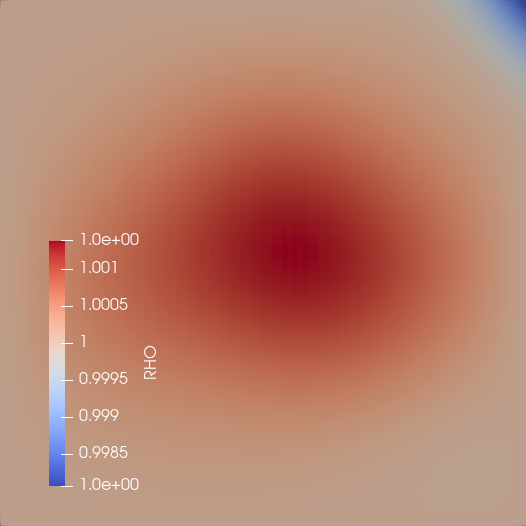}}
\subfigure[$\rho$ at $t=15.0$, $N=200$]{\includegraphics[width=6cm]{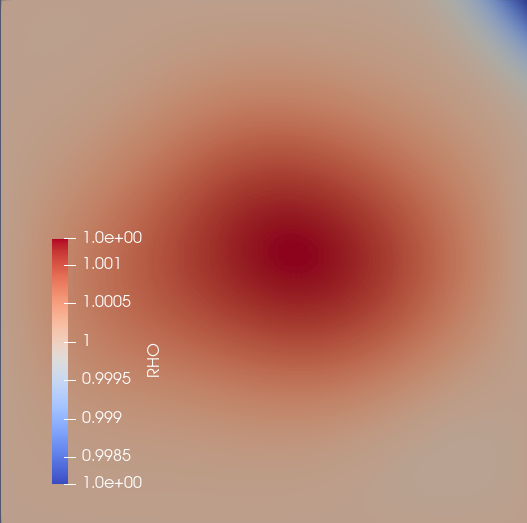}}
\subfigure[$\rho$ at $t=15.0$, $N=400$]{\includegraphics[width=6cm]{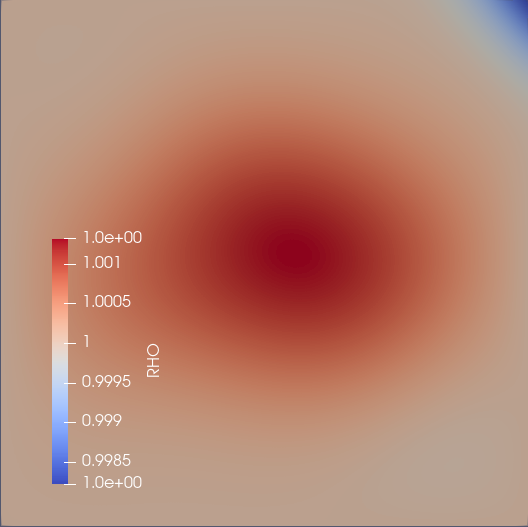}}
\caption{The remains of a vortex that has exited the domain in a diagonal subsonic freestream.}\label{fig:diag}
\end{figure}
Had the domain been infinite, the vortex would be well outside the unit square at $t=15.0$ and the solution in the unit square would have returned to the constant freestream. This should also be the case in the numerical solutions, if the boundary conditions are perfectly transparent. 

However, the boundary conditions are not perfectly transparent.  (They are designed to be entropy-stable.) Nevertheless, since their reflective properties may be of interest, the differences to the freestream are listed in Table \ref{tab:diag_out}. We note that there is little difference between the different grids indicating that the these values represent reflections rather than numerical errors caused by the discretisation scheme.


\begin{table}[htb]
  \caption{Maximal errors for vortex exiting the domain. $t=15.0$ with $Ma=0.1$ and $\alpha=\pi/4$. }\label{tab:diag_out}
  \begin{center}
\begin{tabular}{|c|c|c|c|}
  \hline
  N &  100 & 200 & 400 \\
  \hline
  $\|\rho_{app}-\rho_{ex}\|_\infty$  &0.001370 & 0.001326 & 0.001305 \\
\hline
  $\|E_{app}-E_{ex}\|_\infty$  & 0.003216 & 0.003114  & 0.003066 \\
\hline
\end{tabular}
\end{center}
\end{table}
Initially, the vortex induces $42\%$ perturbation to the freestream density and a $54\%$ perturbation to the freestream energy. The reflections at $t=15.0$ as a percentage of the initial perturbations are $0.3\%$ for both $\rho$ an $E$.

The boundary conditions are of course ``helped'' by the viscosity that diffuses the vortex before it reaches the boundary as well as the subsequent reflections. By successively decreasing $\mu$, the reflections increase somewhat and the results become more and more similar to the inviscid case reported in \cite{Svard25}.

In the second experiment, we turn to the inflow boundary conditions. We place the vortex  well outside the computational domain at $(x,y)=(-0.5,-0.5)$. In a diagonal freestream and  $M=0.1$, the analytical Euler vortex is thus located at the low-left corner when $T=7.1$. By using the vortex solution as boundary data, it should thus enter the computational domain when $T\approx 7$, and at $T=14.2$ it should be roughly centred in the domain. However, since the Euler solution is not an analytical solution to the Navier-Stokes equations, we can not expect a perfect vortex to enter the domain. \footnote{ Since $\mu,\kappa\neq 0$ these data do not represent a perfect vortex. To obtain a perfect vortex, exact analytical expressions of the viscous flux must be added to the inviscid Euler flux data. The point here, however, is merely to demonstrate that the boundary conditions can be used as inflow conditions.} Nevertheless, we have computed this case with $N=200$ and the results are depicted in Fig. \ref{fig:diag_enter}. 
\begin{figure}[ht]
\centering
\subfigure[$\rho$ at $t=7.1$, $N=200$]{\includegraphics[width=6cm]{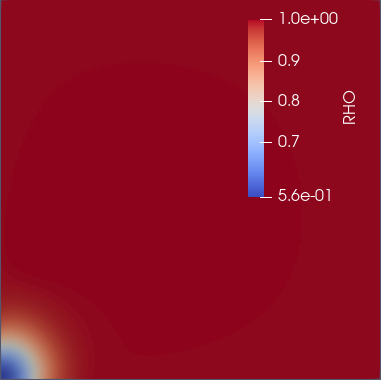}}
\subfigure[$\rho$ at $t=14.2$, $N=200$]{\includegraphics[width=6cm]{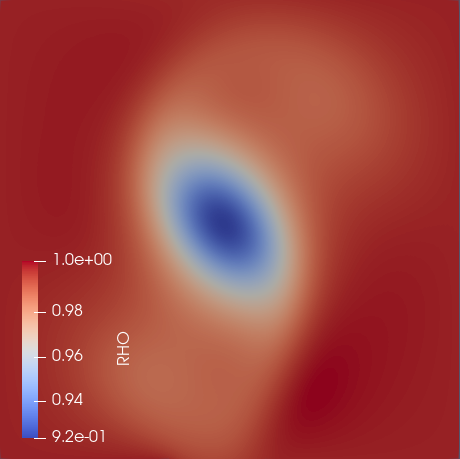}}
\caption{Vortex entering the computational domain.}\label{fig:diag_enter}
\end{figure}
We conclude that the subsonic inflow condition does let the vortex in through the boundary. As, expected, it is distorted and not perfectly symmetric.


\subsection{Circular blast wave}

Next, we compute a circular shock wave emanating from the initial data found in \cite{Svard22}.  It is given by two states at $t=0$: "1" is inside a circle centred at $(x,y)=(0.5,0.5)$ with radius 0.25,  and "2" outside the circle.
\begin{align*}
\rho_1=1,\,\,p_1=1,\\
\rho_2=0.125,\,\,p_2=0.1.
\end{align*}
The velocity is initially zero for both states "1" and "2", and the gas constant is, $R=1/\gamma$. The boundary flux data vector is constructed using the state "2". We stress that the boundary data do not match the shock wave when it arrives at the boundary and hence there will be some reflections.


We run this case on a Cartesian mesh with $N=400$ grid points in each directions. The time step is $\Delta t\approx 0.000417$ and we take $\mu=1.0\cdot 10^{-4}$. The results for $\rho$ are depicted in Fig. \ref{fig:blast}. We note that the boundary treatment is perfectly stable.
\begin{figure}[ht]
\centering
\subfigure[$\rho$ at $t=0.1$, $N=400$]{\includegraphics[width=6cm]{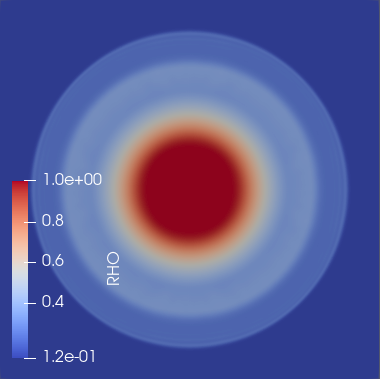}}
\subfigure[$\rho$ at $t=0.2$, $N=400$]{\includegraphics[width=6cm]{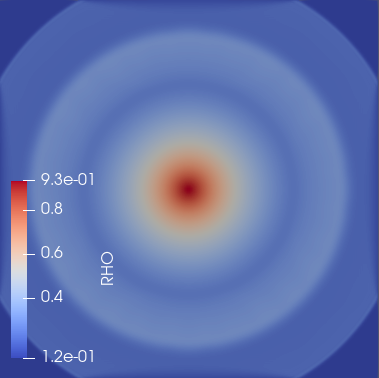}}
\caption{The circular blast wave computed with $\mu=0.0001$.}\label{fig:blast}
\end{figure}

\section{Conclusions}

We have proposed a novel set of entropy-stable boundary conditions for the Navier-Stokes equations.  They can be used as true inflow/outflow conditions, e.g. for channels, and as far-field conditions modelling an open boundary. 

In the inviscid limit, they degenerate to the corresponding Euler boundary conditions proposed in \cite{Svard25}. The new boundary conditions can be shown to be linearly well-posed in all but the subsonic inflow case. We remarked that provable linear well-posedness can be achieved for subsonic inflows by using the same boundary data flux as for the supersonic inflow. However, we did not pursue this option further as it may be detrimental to the regularity of the solution and with $\mu\rightarrow 0$ the Euler boundary conditions in \cite{Svard25} are not recovered.


 To evaluate the robustness of the new boundary conditions, we have run computations of a vortex impinging on an outflow boundary, a vortex entering the domain, and a circular blast wave. All cases run stably. Furthermore, we reported the reflections from the case with a vortex arriving at the boundary, which are less than 0.3\% of the initial perturbations.


\end{document}